\title{L\'evy Flows and associated Stochastic PDEs}
\author{Arvind Kumar Nath$^\ast$}
\address{Arvind Kumar Nath \footnote{Corresponding author}, Department of Mathematics and Statistics, Indian Institute of Technology Kanpur, Kalyanpur, Kanpur - 208016, India.}
\email{yarvind@iitk.ac.in}
\thanks{$^\ast$First author is the Corresponding Author}
\author{Suprio Bhar}
\address{Suprio Bhar, Department of Mathematics and Statistics, Indian Institute of Technology Kanpur, Kalyanpur, Kanpur - 208016, India.}
\email{suprio@iitk.ac.in}
\date{}
\newcommand{\R}{\mathbb{R}}
\newcommand{\pp}{\mathbb{P}}
\newcommand{\Exp}{\mathbb{E}}
\newcommand{\F}{\mathcal{F}}
\newcommand{\Sc}{\mathcal{S}}
\newcommand{\inpr}[3][]{\left\langle#2 \,,\, #3\right\rangle_{#1}}
\newtheorem{theorem}{Theorem}[section]
\newtheorem{lemma}[theorem]{Lemma}
\newtheorem{proposition}[theorem]{Proposition}
\theoremstyle{definition}
\newtheorem{definition}[theorem]{Definition}
\theoremstyle{remark}
\newtheorem{remark}[theorem]{Remark}
\numberwithin{equation}{section}
\begin{document}

\begin{abstract} 
In this paper, we first explore certain structural properties of L\'evy flows and use this information to obtain the existence of strong solutions to a class of Stochastic PDEs in the space of tempered distributions, driven by L\'evy noise. The uniqueness of the solutions follows from Monotonicity inequality. These results extend an earlier work of the second author on the diffusion case.
\end{abstract}
\keywords{$\mathcal{S}^\prime$ valued process, L\'{e}vy processes, Hermite-Sobolev space, Strong solution}
\subjclass[2020]{60G51, 60H10, 60H15}

\maketitle

\section{Introduction}

Let $(\Omega, \F, (\F)_t, \pp)$ be a complete filtered probability space satisfying the usual conditions. Let $\{B_t\}_t$ be a standard $d$-dimensional $(\F_t)_t$ Brownian motion. Let $N$ be a Poisson random measure with on $\R^d$ with associated L\'evy measure $\nu$ on $\R^d \setminus \{0\}$ and let $\widetilde N$ denote the corresponding compensated Poisson random measure. We assume that $B$ and $N$ are independent. For any $z \in \R^d$, consider the following stochastic differential equation (SDE) in $\R^d$
\begin{equation}\label{general-SDE}
\begin{split}
dZ_t &= b( Z_{t-} ) dt + \sigma( Z_{t-} ) dB_t + \int_{ 0<|x| < 1 } F(Z_{ t - } , x ) \widetilde{ N }( dt,dx) ,\\
Z_0 &= z,
\end{split}
\end{equation}
where the coefficients $b:\R^d \to \R^d, \sigma:\R^d \to \R^{d \times d}, F:\R^d \times \{x \in \R^d: 0 < |x| < 1\} \to \R^d$ satisfies the Lipschitz condition and linear growth condition as described below, i.e. there exist constants $K_1 > 0$ and $K_2 > 0$ such that for all $z, z_1, z_2 \in \R^d$ we have
\begin{equation}\label{lipschitz-condition}
    \left|b(z_1)-b(z_2)\right|^2 + \|a(z_1,z_1)-2a(z_1,z_2)+a(z_2,z_2)\|^2 + \int_{0<|x|<1}\left|F(z_1,x)- F(z_2,x)\right|^2\nu(dx)\leq K_1\left|z_1-z_2\right|^2 
 \end{equation}
and
\begin{equation}\label{lin-growth-condition}
    \left|b(z)\right|^2 + \|a(z,z)\|^2 + \int_{0<|x|<1}\left|F(z,x)\right|^2\nu(dx)\leq K_2(1+|z|^2).
\end{equation}
Here, $a(x,y) := \sigma(x)\sigma^t(y), \forall x, y \in \R^d$, $\sigma^t$ being the transpose of $\sigma$. The term $|x|$ denotes the Euclidean norm of a vector $x$ and $\|A\|$ denotes the matrix seminorm $\sum_{i = 1}^d |a_{ii}|$ of any real square matrix $A = (a_{ij})$ of order $d$. The component functions of $b, \sigma$ and $F$ shall be denoted by $b_i, \sigma_{ij}, F_i; i, j = 1, \cdots, d$, respectively.

We denote the unique strong solution \cite[Theorem 6.2.3]{MR2512800} to the SDE \eqref{general-SDE} above by $\{Z^z_t\}_t$. In the first half of the present paper, we look at the flows associated to this SDE. In stochastic analysis/dynamics, stochastic flows play an important role (see \cite{MR2531088, MR866340, MR608026, MR805125, MR517235, MR776981, MR876080, MR1472487, MR2020294, MR3929750, MR540035, MR622556} and the references therein). In this article, we are interested in specific structures on L\'evy flows (see \cite{MR1276439, MR1251218, MR1093330, MR1045162, MR2090755, MR3929750, MR2460554, MR2512800} and the references therein) and we extend results of \cite{MR3647575} on the diffusion case.

In \cite{MR3647575}, for diffusions, i.e. for $F \equiv 0, G \equiv 0$, a decomposition of $\{Z^z_t\}_t$ into a `global' component $\{Z^0_t\}_t$ and a deterministic `local' component $\{Z^z_t - Z^0_t\}_t$ was considered. This situation was characterized by \cite[Theorem 2.5]{MR3647575}, which implies that the diffusion coefficient $\sigma$ is a real non-singular matrix of order $d$ and the drift coefficient $b$ is in an affine form, viz. $b(x) = \beta + Ax, \forall x \in \R^d$ for some $\beta \in \R^d$ and a real square matrix $A$ of order $d$. Since the flows generated in this class of SDEs are Gaussian, this result can also be treated as a characterization result for Gaussian flows. The well-known Ornstein-Uhlenbeck diffusion \cite[p. 241]{MR2512800} falls in this class.

In this paper, we extend the above characterization to the case of L\'evy flows. We start with an analogue of \cite[Definition 2.2]{MR3647575}.

\begin{definition}\label{gen-ddi}
We say that the  solution $\{Z^z_t\}_t$ to the SDE (\ref{general-SDE}) depends deterministically on the initial condition if there exists a function $f:[0,\infty)\times\R^d \to \R^d$ such that for all $z\in \R^d$, we have a.s.
\begin{equation}\label{dependence deterministically on initial condition dd}
    Z_t^z ( \omega ) = f(t,z) + Z^0_t( \omega )\; \forall t \geq 0.
\end{equation}
We take $f$ to be jointly measurable in $t$ and $z$.
\end{definition}

Based on the assumptions on the coefficients, if the deterministic dependence on the initial condition holds as above, then necessarily for all $z \in \R^d$ and $t \geq 0$, we have 
\begin{equation}
\begin{split}
f(t,z) &= \Exp(Z^z_t) - \Exp(Z^0_t)\\
&= z + \int^t_0 \left( \Exp(b(Z^z_{s-})) - \Exp(b(Z^0_{s-})) \right) ds
\end{split}
\end{equation}
For every fixed $z$, $t \mapsto \Exp(b(Z^z_{t-}))-\Exp(b(Z^0_{t-}))$ is left continuous with right limits and hence, the left partial derivative of $f$ with respect to $t$ exists on $(0, \infty)$. We shall denote this derivative by $\frac{\partial f}{\partial t}$ and take $\frac{\partial f}{\partial t}(0, z) = 0, \forall z$. Note that, $\frac{\partial f}{\partial t}(t, z) = \Exp(b(Z^z_{t-}))-\Exp(b(Z^0_{t-})), t > 0$. 

We now state the main result of the first half of this paper.

\begin{theorem}\label{ddi-characterization}
Let $\sigma, b,F$ be as stated above. Suppose the following happen:
\begin{enumerate}[label=(\roman*)]
    \item there exists $z\in\R^d$ such that the determinant of  $\sigma(z)=(\sigma_{ij}(z))$ is not zero,
    \item $b_i\in C^2(\R^d,\R),i=1,2\cdots,d$,
    \item for every fixed $z\in\R^d$, the map $t \mapsto \frac{\partial f}{\partial t}(t,z) $ is of bounded variation.
\end{enumerate}
Then the solution of the SDE \eqref{general-SDE} depends deterministically on the initial condition through \eqref{dependence deterministically on initial condition dd} if and only if $\sigma$ is a real non-singular matrix of order $d$, $b$ is of the form $b(z)=Cz+K$,  $F(z,x)$ depends only on $x$ and  $f(t,z)=e^{tC}z \; \forall t\geq 0,  z\in \R^d$  where $K \in\R^d$ and $C$ is a real matrix of order $d$.
\end{theorem}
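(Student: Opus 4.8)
The plan is to prove the two implications separately, with the forward (characterization) direction carrying essentially all of the work. For the easy direction, assume $\sigma$ is a constant non-singular matrix, $b(z) = Cz + K$, and $F(z,x) = F(x)$. I would subtract the integral form of \eqref{general-SDE} for the initial data $z$ and $0$; since the diffusion and jump integrands no longer depend on the state, those two stochastic integrals cancel identically, and $Y_t := Z^z_t - Z^0_t$ solves the deterministic linear equation $Y_t = z + \int_0^t C\,Y_{s-}\,ds$. Solving this ODE gives $Y_t = e^{tC}z$, which is deterministic and equals the asserted $f(t,z)$, establishing deterministic dependence on the initial condition with the claimed $f$.

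For the converse, suppose \eqref{dependence deterministically on initial condition dd} holds. Subtracting the integral equations for $Z^z$ and $Z^0$ and substituting $Z^z_{s-} = f(s,z) + Z^0_{s-}$, I would express $f(t,z) - z$ as the sum of an absolutely continuous drift term, a Brownian integral with integrand $\sigma(f(s,z)+Z^0_{s-}) - \sigma(Z^0_{s-})$, and a compensated Poisson integral with integrand $F(f(s,z)+Z^0_{s-},x) - F(Z^0_{s-},x)$. Because the left side $f(t,z)$ is deterministic and of finite variation, uniqueness of the canonical (special) semimartingale decomposition forces both martingale parts to vanish; computing their (predictable) quadratic variations via the It\^o isometry then yields, for each fixed $z$ and a.e.\ $s$, the a.s.\ identities $\sigma(f(s,z)+Z^0_{s-}) = \sigma(Z^0_{s-})$ and $F(f(s,z)+Z^0_{s-},x) = F(Z^0_{s-},x)$ for $\nu$-a.e.\ $x$.

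The next step removes the $z$-dependence from these identities. Since $\sigma$ is continuous and $F(\cdot,x)$ is $L^2(\nu)$-continuous in the spatial variable (both consequences of \eqref{lipschitz-condition}), the identities persist for every $w$ in the support of the law of $Z^0_{s-}$. As $s \downarrow 0$ the law of $Z^0_{s-}$ converges weakly to $\delta_0$, so for each fixed $z$ I can select $s_n \downarrow 0$ in the admissible (a.e.) set and points $w_n$ in the support of $Z^0_{s_n-}$ with $w_n \to 0$. Passing to the limit in $\sigma(f(s_n,z)+w_n) = \sigma(w_n)$ and using the continuity $f(s_n,z) \to f(0,z) = z$ gives $\sigma(z) = \sigma(0)$ for all $z$; the identical argument applied to $F$ gives $F(z,x) = F(0,x)$ for $\nu$-a.e.\ $x$. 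Thus $\sigma$ is constant and $F$ depends only on $x$, and hypothesis (i) then upgrades the constant value of $\sigma$ to a non-singular matrix.

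It remains to identify $b$ and $f$, and this is where I expect the main obstacle to lie. With $\sigma$ now constant and non-singular, the independent Gaussian component $\sigma B_s$ forces the law of $Z^0_{s-}$ to have full support $\R^d$ for every $s > 0$. Equating finite-variation parts and applying the Lebesgue differentiation theorem (via a Fubini argument, with the bounded-variation hypothesis (iii) supplying the needed regularity of $t \mapsto \frac{\partial f}{\partial t}(t,z)$) gives, for a.e.\ $t$ and every $w \in \R^d$, the relation $b(f(t,z)+w) - b(w) = \frac{\partial f}{\partial t}(t,z)$. Differentiating in $w$, which is legitimate by the $C^2$ regularity in (ii), eliminates the right side and yields $Db(f(t,z)+w) = Db(w)$; letting $t \downarrow 0$ along the admissible set and using $f(t,z) \to z$ together with continuity of $Db$ gives $Db(z+w) = Db(w)$ for all $z,w$, whence $Db \equiv C$ is constant and $b(z) = Cz+K$. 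Substituting this affine form into $\frac{\partial f}{\partial t}(t,z) = \Exp[b(f(t,z)+Z^0_{t-}) - b(Z^0_{t-})] = C\,f(t,z)$ with $f(0,z) = z$ then forces $f(t,z) = e^{tC}z$. The delicate points throughout are matching the $s$- and $t$-null sets against the topology of the supports, and chaining the arguments in the correct order (non-singularity of $\sigma$ must be secured first so that the full-support property is available for the drift analysis); the regularity hypotheses (ii)–(iii) are precisely what justify the differentiation in $w$ and the limit $t \downarrow 0$ in the drift relation.
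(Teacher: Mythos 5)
Your proposal is correct in outline and follows the paper's skeleton for the first half of the converse: both arguments isolate $Z^z-Z^0$ as a predictable finite-variation process plus a continuous martingale $M^1$ plus a purely discontinuous martingale $M^2$, kill the martingale parts by uniqueness of the canonical decomposition (the paper makes explicit the step you elide: $[M^1,M^2]=0$ by independence of $B$ and $N$, so $M^1+M^2=0$ forces $[M^1]=[M^2]=0$), and read off $\sigma(Z^z_s)=\sigma(Z^0_s)$ and $F(Z^z_s,x)=F(Z^0_s,x)$ from the quadratic variations via It\^o isometry; the passage to $\sigma(z)=\sigma(0)$ and $F(z,x)=F(0,x)$ by letting $s\downarrow 0$ and using $Z^0_s\to 0$ matches the argument the paper imports from the diffusion case. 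Where you genuinely diverge is the drift. The paper expands $b(Z^z_t)$ by It\^o's formula --- this is precisely where hypotheses (ii) and (iii) enter: $C^2$ regularity to apply It\^o, and bounded variation of $t\mapsto\frac{\partial f}{\partial t}(t,z)$ to make $b(Z^z_t)-b(Z^0_t)$ a finite-variation process --- and then applies the decomposition lemma a second time to obtain $b'(Z^z_s)=b'(Z^0_s)$ before letting $s\downarrow 0$. You instead upgrade the relation $b(f(t,z)+w)-b(w)=\frac{\partial f}{\partial t}(t,z)$ from $w$ in the support of the law of $Z^0_t$ to all $w\in\R^d$ via a full-support claim, and then differentiate in $w$ (one could even avoid differentiating and solve the resulting Cauchy functional equation, so your route needs neither (ii) nor (iii)). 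The price is the unproved assertion that the law of $Z^0_t$ has full support for $t>0$; this is true in the situation at hand (condition on the independent jump part and apply Girsanov to the resulting nondegenerate diffusion with Lipschitz drift), but it is a nontrivial input that the paper's route avoids entirely. Your ordering of the steps --- securing non-singularity and constancy of $\sigma$ before the drift analysis --- is the correct one, and the easy direction and the final identification $f(t,z)=e^{tC}z$ agree with the paper.
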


The proof of this result is similar to \cite[Theorem 2.5]{MR3647575} and has been discussed in Section 2. To avoid repetitions of similar arguments from \cite{MR3647575}, a sketch of the proof is provided for the one-dimensional case only.

Next, we discuss the existence and uniqueness results of strong solutions for a class of stochastic partial differential equations (SPDEs) (see \cite{MR570795, MR1999259, MR1135324, MR2479730, MR3236753, MR2373102, MR2356959, MR1465436, MR2560625, MR2857016, MR2765423, MR2674056} and the references therein), which are an extension of the results considered in \cite[Section 3]{MR3647575}. To do this, we first recall a countably Hilbertian topology on the space $\Sc(\R^d)$ of real valued rapidly decreasing smooth 
functions on $\R^d$ and its dual $\Sc^\prime(\R^d)$, the space of tempered distributions.

For any $p \in \R$, let the Hermite-Sobolev spaces  $\Sc_p(\R^d)$ be the 
completion of 
$\Sc(\R^d)$ in the inner product $\inpr[p]{\cdot}{\cdot}$ which is defined in 
terms of the $L^2(\R^d)$ inner product (see
\cite[Chapter 1.3]{MR771478} for the details). The spaces $\Sc_p(\R^d), p 
\in 
\R$ are real separable Hilbert spaces. Note that for 
$p>0$, $\Sc_p(\R^d) \subset L^2(\R^d)$ and $(\Sc_{-p}(\R^d), \|\cdot\|_{-p})$ is isometrically isomorphic to the dual to $(\Sc_p(\R^d),
\|\cdot\|_p)$. This duality extends from the identification of $\Sc_0(\R^d) = L^2(\R^d)$ duality with itself. Furthermore,
\[\Sc(\R^d) = \bigcap_{p \in \R}(\Sc_p(\R^d), \|\cdot\|_p), \quad 
\Sc'(\R^d) = \bigcup_{p \in \R}(\Sc_p(\R^d), \|\cdot\|_p)
\]
Given $\psi \in \Sc(\R^d)$ (or $\Sc_p(\R^d)$) and $\phi \in \Sc'(\R^d)$
(or
$\Sc_{-p}(\R^d)$), the action of $\phi$ on $\psi$ will be denoted by
$\inpr{\phi}{\psi}$. 

Now, we consider a class of SPDEs in the space of tempered distributions $\Sc^\prime(\R^d)$ associated to the SDE \eqref{general-SDE} given above. Proofs of new results have been discussed in Section 3. Take $\sigma, b, F$ as in Theorem \ref{ddi-characterization}. Note that 
\begin{equation}\label{integrable-F}\int_{0<|x|<1}\left|F(0,x)\right|^2\nu(dx)< \infty.
\end{equation}
In addition, we assume that
\begin{equation}\label{bound-F}\sup_{0<|x|<1}\left|F(0,x)\right|< \infty.
\end{equation}
Now, look at the linear SPDE

\begin{equation}\label{general-SPDE}
 \begin{split}
  d{Y}_t &= A^*({Y}_{t-})dB_t + L^*({Y}_{t-})dt +\int_{0<|x|<1}(\tau_{F(0,x)}-Id){Y}_{t-} \widetilde{N}(dt,dx) 
\\
&\qquad +\int_{0<|x|<1}(\tau_{F(0,x)}-Id + \sum_{i = 1}^d F_i(0,x) \partial_i){Y}_{t-}\nu(dx)dt,\\
Y_0 &= \psi,
\end{split}
 \end{equation}
with the terms described as below.

\begin{enumerate}[label=(\roman*)]
    \item We take the initial condition $\psi$ to be in $L^1(\R^d)$. Note that $L^1(\R^d) \subset \Sc_{-p}(\R^d)$ for any $p > \frac{d}{4}$, see \cite[Lemma 3.2]{MR3647575}.
    \item Given $\phi \in \Sc^\prime(\R^d)$, the distributional derivatives $\partial_i \phi \in \Sc^\prime(\R^d), i = 1, \cdots, d$ are defined by the duality relation
    \[\inpr[]{\partial_i \phi}{g} = -\inpr[]{\phi}{\partial_i g}, \forall g \in \Sc(\R^d).\]
    Similarly, the multiplication operators $M_i, 
i=1,\cdots,d$ 
defined by $(M_i g)(x):= x_i g(x),\, g\in \Sc(\R^d), x=(x_1,\cdots,x_d) 
\in \R^d$ on $\Sc(\R^d)$ are extended by duality to $M_i:\Sc'(\R^d)\to\Sc'(\R^d)$, where
\[\inpr[]{M_i \phi}{g} = \inpr[]{\phi}{M_i g}, \forall g \in \Sc(\R^d).\]
It is well-known that $\partial_i,M_i:\Sc_{p}(\R^d)\to\Sc_{p-\frac{1}{2}}(\R^d)$ are bounded linear 
operators, 
for any $p \in 
\R$.
    \item Define operators $A,L$ as follows:
for $\phi \in 
\Sc(\R^d)$ and $x \in \R^d$,
\begin{equation}
\begin{cases}
A\phi := (A_1\phi,\cdots,A_d\phi),\\
A_i\phi(x) := \sum_{k=1}^d \sigma_{ki}\partial_k\phi(x),\\
L\phi(x) := \frac{1}{2}\sum_{i,j=1}^d 
(\sigma\sigma^t)_{ij}\partial^2_{ij}\phi(x)+\sum_{i=1}^d
b_i(x)\partial_i\phi(x).
\end{cases}
\end{equation}
On $\Sc^\prime(\R^d)$, define the adjoint 
operators $A^{\ast}, L^\ast$ as follows: for $\psi \in 
\Sc^\prime(\R^d)$,
\begin{equation}\label{adj-ops}
\begin{cases}
A^{\ast}\psi := (A_1^{\ast}\psi,\cdots,A_d^{\ast}\psi),\\
A_i^{\ast}\psi := -\sum_{k=1}^d \partial_k\left(\sigma_{ki}\psi\right),\\
L^{\ast}\psi := 
\frac{1}{2}\sum_{i,j=1}^d \partial^2_{ij}\left((\sigma\sigma^t)_{ij}
\psi\right)-\sum_{i=1}^d
\partial_i\left(b_i\psi\right).
\end{cases}
\end{equation}
For any $p \in \R$, $A^{\ast}_i: \Sc_{-p}(\R^d) \to \Sc_{p-\frac{1}{2}}(\R^d), i = 1, 2, \cdots, d$ and $L^\ast:\Sc_{p}(\R^d) \to \Sc_{p-1}(\R^d)$ are bounded linear operators.

\item Translation operators $\tau_x:\Sc'(\R^d)\to \Sc'(\R^d), x \in \R^d$ are defined by the duality relation
\[\inpr[]{\tau_x \phi}{g} = \inpr[]{\phi}{\tau_{-x} g}, \forall \phi \in \Sc^\prime(\R^d), g \in \Sc(\R^d),\]
where $(\tau_x g)(y) := g(y-x),\, y \in \R^d$.
\end{enumerate}

Motivated by \cite{MR2373102, MR3647575}, we construct a solution to \eqref{general-SPDE} in the following way. There exists a $\pp$ null set $\mathcal{N}$ such that for all $\omega\in \Omega\setminus\mathcal{N},t\geq 0$
\[Z^0_t=  \int^t_0\sigma dB_s + \int^t_0(CZ^0_{s-}+K) ds +\int^t_0\int_{ 0<|x| < 1 } F(0 , x ) \widetilde{ N }( ds,dx)\]
We take $\{Z^z_t\}_t$ with $Z^z_t := Z^0_t+ze^{tC}, \forall t \geq 0, z \in \R^d$ to be the solution to the SDE \eqref{general-SDE}. Now define, 
\begin{equation}\label{Y-psi}
Y_t(\omega)(\psi) := \int_{\R^d} \psi(z) \delta_{Z^z_t(\omega)}\, dz, \forall t \geq 0, \omega \in \Omega\setminus\mathcal{N}
\end{equation}
and set $Y_t(\omega)(\psi) = 0, \forall t \geq 0, \omega \in \mathcal{N}$. Here, for $z \in \R^d$, let $\delta_z$ denotes the Dirac distribution supported at the point $z$. For $p > \frac{d}{4}$, $\sup_{z \in \R^d}\|\delta_z\|_{-p} \leq C(p)$ for some positive constant $C(p)$ depending on $p$ (see \cite[Theorem 4.1]{MR2373102}). This implies the well-definedness of $\{Y_t(\psi)\}_t$ and we have the following analogue of \cite[Proposition 3.5]{MR3647575}.

\begin{proposition}
Let $\psi, \{Z_t^z\}_t, \{Y_t(\psi)\}_t$ be as above. Let $p > \frac{d}{4}$. Then 
$\{Y_t(\psi)\}_t$ is an $(\F_t)_t$ adapted $\Sc_{-p}(\R^d)$ valued
process with paths which are right continuous with left limits. Furthermore, $Y_t(\psi)$ is norm-bounded, where the bound can be 
chosen to be independent of $t$.
\end{proposition}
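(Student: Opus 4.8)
The plan is to transfer the structural properties of the finite-dimensional process $\{Z^z_t\}_t$ through the Bochner integral in \eqref{Y-psi}, using the uniform estimate $\sup_{z \in \R^d}\|\delta_z\|_{-p} \leq C(p)$ as the principal tool. I would begin with the norm bound, which is both the simplest assertion and the ingredient that makes the Bochner integral meaningful. For fixed $t$ and $\omega \in \Omega \setminus \mathcal{N}$, the pointwise bound $\|\psi(z)\delta_{Z^z_t(\omega)}\|_{-p} \leq C(p)|\psi(z)|$ together with $\psi \in L^1(\R^d)$ shows that the integrand is Bochner integrable in $\Sc_{-p}(\R^d)$, and
\[\|Y_t(\psi)(\omega)\|_{-p} \leq \int_{\R^d}|\psi(z)|\,\|\delta_{Z^z_t(\omega)}\|_{-p}\,dz \leq C(p)\int_{\R^d}|\psi(z)|\,dz.\]
Since the right-hand side is independent of both $t$ and $\omega$, this settles the uniform norm-boundedness and simultaneously confirms that $Y_t(\psi)$ takes values in $\Sc_{-p}(\R^d)$.

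For adaptedness I would fix $t$ and argue via joint measurability. As $Z^z_t = Z^0_t + ze^{tC}$ with $Z^0_t$ being $\F_t$-measurable, the map $(z,\omega) \mapsto Z^z_t(\omega)$ is jointly measurable (continuous in $z$, measurable in $\omega$); composing with the norm-continuous map $x \mapsto \delta_x$ from $\R^d$ into $\Sc_{-p}(\R^d)$ (a standard property for $p > \frac{d}{4}$, which can be read off from a uniform tail estimate on the Hermite expansion of $\delta_x$) shows that $(z,\omega) \mapsto \psi(z)\delta_{Z^z_t(\omega)}$ is a jointly measurable, Bochner-integrable $\Sc_{-p}(\R^d)$-valued map. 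A Fubini-type theorem for Bochner integrals then yields that $\omega \mapsto Y_t(\psi)(\omega) = \int_{\R^d}\psi(z)\delta_{Z^z_t(\omega)}\,dz$ is $\F_t$-measurable.

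The càdlàg property is where the real work lies, and it is the step I expect to be the main obstacle. The key input is that, outside a $\pp$-null set, the paths $t \mapsto Z^0_t(\omega)$ are right continuous with left limits in $\R^d$ (being a solution of the L\'evy-driven SDE), and since $t \mapsto ze^{tC}$ is continuous, the same holds for $t \mapsto Z^z_t(\omega)$ for every $z$. To transfer this to $Y$, I would fix such an $\omega$ and, for $s \downarrow t$, use the continuity of $x \mapsto \delta_x$ to obtain $\delta_{Z^z_s(\omega)} \to \delta_{Z^z_t(\omega)}$ in $\Sc_{-p}(\R^d)$ for every $z$; dominating by the integrable function $z \mapsto 2C(p)|\psi(z)|$ and invoking dominated convergence for Bochner integrals gives $\|Y_s(\psi)(\omega) - Y_t(\psi)(\omega)\|_{-p} \to 0$. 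The identical argument with $s \uparrow t$ and the left limit $Z^z_{t-}(\omega)$ produces the left limit $Y_{t-}(\psi)(\omega) = \int_{\R^d}\psi(z)\delta_{Z^z_{t-}(\omega)}\,dz$ in $\Sc_{-p}(\R^d)$.

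The delicate point throughout the last step is that the interchange of limit and spatial integral must be legitimized uniformly across the whole integration in $z$; this is precisely where the uniform bound $\sup_z\|\delta_z\|_{-p} < \infty$ (furnishing the dominating function) and the norm-continuity of the Dirac embedding $x \mapsto \delta_x$ (furnishing the pointwise convergence in $z$) are indispensable. Once these two facts are in hand, no pathwise regularity of the L\'evy flow beyond the càdlàg property is needed, so the argument is a clean lift of the finite-dimensional path regularity to the $\Sc_{-p}(\R^d)$-valued setting.
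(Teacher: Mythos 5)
Your proposal is correct and follows essentially the route the paper intends: the paper offers no written proof beyond invoking the uniform estimate $\sup_{z}\|\delta_z\|_{-p}\leq C(p)$ and citing the diffusion-case analogue, and your argument is precisely that estimate used as a dominating function to lift the c\`adl\`ag regularity and adaptedness of $z\mapsto Z^z_t=Z^0_t+ze^{tC}$ through the Bochner integral. The only genuinely new point relative to the cited diffusion case, namely replacing path continuity by right continuity with left limits, is handled correctly by your dominated-convergence argument together with the norm continuity of $x\mapsto\delta_x$.
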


Consider 
the linear map $Z_t(\omega): \Sc(\R^d) \to \Sc(\R^d)$ defined by 
$(Z_t(\omega)\phi)(z):=\phi(Z_t^z(\omega)),\, z \in \R^d$. Now, we have the following analogue of \cite[Lemma 3.6]{MR3647575}.
\begin{lemma}\label{composition-defined}
Fix any $t\geq 0, \omega\in \Omega\setminus\mathcal{N}$. The linear map 
$Z_t(\omega): \Sc(\R^d) \to \Sc(\R^d)$ is continuous.
\end{lemma}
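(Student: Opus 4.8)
The plan is to exploit the affine structure of the flow. For fixed $t \ge 0$ and $\omega \in \Omega\setminus\mathcal{N}$, the prescription $Z^z_t(\omega) = Z^0_t(\omega) + e^{tC}z$ shows that $z \mapsto Z^z_t(\omega)$ is the affine bijection $T(z) := e^{tC}z + Z^0_t(\omega)$ of $\R^d$, whose inverse is $w \mapsto e^{-tC}(w - Z^0_t(\omega))$; recall that $e^{tC}$ is always invertible, with inverse $e^{-tC}$, and that $Z^0_t(\omega)$ is a fixed point of $\R^d$. Thus $Z_t(\omega)\phi = \phi\circ T$, and the lemma reduces to the assertion that composition with an invertible affine self-map of $\R^d$ is a continuous linear operator on $\Sc(\R^d)$. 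This affine form is exactly the output of Theorem \ref{ddi-characterization}, which forces $b(z) = Cz + K$ and $\sigma$ constant.

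First I would recall that the countably Hilbertian topology on $\Sc(\R^d)$ generated by the Hermite-Sobolev norms $\|\cdot\|_p$ coincides with the usual Fr\'echet topology given by the seminorms $q_{\alpha,\beta}(\phi) := \sup_{x\in\R^d}|x^\alpha\,\partial^\beta\phi(x)|$, for multi-indices $\alpha,\beta$ (see \cite{MR771478}). Since $\Sc(\R^d)$ is metrizable, it then suffices to produce, for each pair $(\alpha,\beta)$, finitely many pairs $(\alpha',\beta')$ and a constant $M = M(t,\omega,\alpha,\beta)$ with $q_{\alpha,\beta}(\phi\circ T) \le M\sum_{(\alpha',\beta')} q_{\alpha',\beta'}(\phi)$ for all $\phi\in\Sc(\R^d)$. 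Such an estimate simultaneously establishes $\phi\circ T \in \Sc(\R^d)$, i.e. well-definedness, and continuity.

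The estimate itself is a routine chain-rule computation, made especially simple by the fact that $T$ is affine. Because the linear part of $T$ is the constant matrix $e^{tC}$ and all second- and higher-order derivatives of $T$ vanish, the chain rule yields $\partial^\beta(\phi\circ T)(z) = \sum_{|\gamma| = |\beta|} c_\gamma\,(\partial^\gamma\phi)(T(z))$, where the $c_\gamma$ are fixed polynomials in the entries of $e^{tC}$; in particular there are no Fa\`a di Bruno complications and no lower-order terms. Writing $w = T(z)$ and $z = e^{-tC}(w - Z^0_t(\omega))$, the monomial $z^\alpha$ becomes a polynomial in $w$ of degree $|\alpha|$. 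Substituting and using that $T$ is onto, so that $\sup_z = \sup_w$, each summand of $z^\alpha\,\partial^\beta(\phi\circ T)(z)$ takes the form (polynomial in $w$)$\times(\partial^\gamma\phi)(w)$, whose supremum is controlled by a finite linear combination of the $q_{\alpha',\gamma}(\phi)$ since rapid decay dominates polynomial growth. This delivers the desired bound.

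I expect the only point requiring care — rather than a genuine obstacle — to be the bookkeeping: tracking how the multi-indices and constants depend on $e^{\pm tC}$ and on $Z^0_t(\omega)$, and invoking the equivalence of the Hermite-Sobolev and Schwartz-seminorm topologies. The structural input that makes everything work is precisely the globally invertible, derivative-free-beyond-first-order nature of $T$, which keeps the seminorm bounds finite and linear in $\phi$; a general (non-affine) diffeomorphism would require controlling higher derivatives of the map and its inverse and could destroy the rapid decay. Since $t$ and $\omega$ are fixed throughout, the dependence of $M$ on them is harmless.
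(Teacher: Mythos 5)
Your proof is correct and is essentially the argument the paper relies on (the lemma is stated as the analogue of \cite[Lemma 3.6]{MR3647575} and its proof rests on exactly the same observation): since $Z^z_t(\omega)=e^{tC}z+Z^0_t(\omega)$ is an invertible affine map of $\R^d$, precomposition with it preserves $\Sc(\R^d)$ and is continuous, as your chain-rule and seminorm estimates show. No gaps.
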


Observe that for any $\phi \in \Sc(\R^d),\psi \in L^1(\R^d)$, we have
\[\inpr{Y_t(\psi)}{\phi} = \int_{\R^d}\psi(z)\,\phi(Z_t^z)\, 
dz = \int_{\R^d}\psi(z)\,(Z_t(\phi))(z)\, dz = \inpr{\psi}{Z_t(\phi)}.\]
This implies
\begin{equation}\label{YasZ*}
Y_t(\psi) = Z^{\ast}_t(\psi),
\end{equation}
where $Z^{\ast}_t(\omega): \Sc^\prime(\R^d)\to \Sc^\prime(\R^d)$ is the transpose of the linear map $Z_t(\omega)$. The next result is the main result of the second half of this paper and is an analogue of \cite[Theorem 3.8]{MR3647575}.

\begin{theorem}\label{SDE-Y}
Let $p > \frac{d}{4}$ and $\psi \in L^1(\R^d)$. Then the 
$\Sc_{-p}(\R^d)$ valued process $\{Y_t(\psi)\}_t$ satisfies the SPDE \eqref{general-SPDE}. This solution is also unique.
\end{theorem}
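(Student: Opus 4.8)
The plan is to establish existence by testing the explicit candidate $\{Y_t(\psi)\}_t$ against smooth functions and applying the finite-dimensional Itô formula, and to establish uniqueness through a monotonicity inequality together with Gronwall's lemma. For existence, I would fix $\phi\in\Sc(\R^d)$ and apply Itô's formula for jump diffusions to $\phi(Z^z_t)$, using the form of the SDE valid under the hypotheses of Theorem \ref{ddi-characterization}, namely $dZ^z_t=(CZ^z_{t-}+K)\,dt+\sigma\,dB_t+\int_{0<|x|<1}F(0,x)\,\widetilde N(dt,dx)$. This produces a drift term $L\phi(Z^z_{s-})$, a Brownian term $\sum_k A_k\phi(Z^z_{s-})\,dB^k_s$, a compensated jump term $[\phi(Z^z_{s-}+F(0,x))-\phi(Z^z_{s-})]$ against $\widetilde N$, and a compensator $[\phi(Z^z_{s-}+F(0,x))-\phi(Z^z_{s-})-\sum_i F_i(0,x)\partial_i\phi(Z^z_{s-})]$ against $\nu(dx)\,ds$. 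Multiplying by $\psi(z)$ and integrating in $z$, then using the definition \eqref{Y-psi}, the identity $\phi(\,\cdot\,+F(0,x))=(\tau_{-F(0,x)}\phi)(\,\cdot\,)$, and the duality relations defining $A^{\ast},L^{\ast},\tau_x,\partial_i$, each spatial integral $\int\psi(z)(\,\cdot\,)(Z^z_{s-})\,dz$ is recognized as $\inpr{(\,\cdot\,)^{\ast}Y_{s-}(\psi)}{\phi}$. The output is precisely the tested form of \eqref{general-SPDE}.

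The technical heart of the existence half is to justify interchanging $\int_{\R^d}\psi(z)\,dz$ with the stochastic and compensated Poisson integrals (a stochastic Fubini theorem) and to verify that all resulting $\Sc_{-p}(\R^d)$ valued integrals converge. Here I would use the uniform bound $\sup_z\|\delta_z\|_{-p}\le C(p)$ and the norm bound on $Y_t(\psi)$ from the Proposition above to place the integrands in the correct spaces, the boundedness of $A^{\ast},L^{\ast},\partial_i,M_i$ between Hermite-Sobolev spaces, and \eqref{integrable-F}--\eqref{bound-F} to control the jump terms. In particular, convergence in $\Sc_{-p}(\R^d)$ of the compensator integral $\int_{0<|x|<1}(\tau_{F(0,x)}-Id+\sum_i F_i(0,x)\partial_i)Y_{s-}(\psi)\,\nu(dx)$ rests on the operator $\tau_{F(0,x)}-Id+\sum_i F_i(0,x)\partial_i$ being a second-order remainder in $F(0,x)$, so that its $\Sc_{-p}(\R^d)$ norm is $O(|F(0,x)|^2)$, which is $\nu$ integrable.

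For uniqueness, linearity reduces matters to showing that a solution $\{U_t\}_t$ of \eqref{general-SPDE} with $U_0=0$ vanishes. I would apply Itô's formula to $\|U_t\|_{-p}^2$, take expectations, and collect the drift contributions (the continuous part plus the $\nu(dx)$ compensator coming from the quadratic variation of the jumps) into the quantity
\[
2\inpr[-p]{L^{\ast}\phi}{\phi}+\sum_{i=1}^d\|A_i^{\ast}\phi\|_{-p}^2+\int_{0<|x|<1}\Big(\|\tau_{F(0,x)}\phi\|_{-p}^2-\|\phi\|_{-p}^2+2\sum_{i=1}^d F_i(0,x)\,\inpr[-p]{\phi}{\partial_i\phi}\Big)\nu(dx),
\]
evaluated at $\phi=U_{s-}$. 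The monotonicity inequality asserts that this is bounded by $C\|\phi\|_{-p}^2$; granting it, one obtains $\Exp\|U_t\|_{-p}^2\le C\int_0^t\Exp\|U_s\|_{-p}^2\,ds$ and concludes $U\equiv 0$ by Gronwall's lemma.

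The main obstacle is the jump part of this monotonicity inequality, the only genuinely new ingredient beyond the diffusion case of \cite{MR3647575} (from which the bound $2\inpr[-p]{L^{\ast}\phi}{\phi}+\sum_i\|A_i^{\ast}\phi\|_{-p}^2\le C\|\phi\|_{-p}^2$ is available). Writing $g(s):=\|\tau_{sF(0,x)}\phi\|_{-p}^2$, the $\nu$ integrand above equals $g(1)-g(0)-g'(0)=\tfrac12 g''(\xi)$ for some $\xi\in(0,1)$, with
\[
g''(s)=2\,\|(F(0,x)\cdot\nabla)\tau_{sF(0,x)}\phi\|_{-p}^2+2\,\inpr[-p]{\tau_{sF(0,x)}\phi}{(F(0,x)\cdot\nabla)^2\tau_{sF(0,x)}\phi}.
\]
Were $\partial_i$ skew-adjoint on $\Sc_{-p}(\R^d)$, these two terms would cancel exactly; skew-adjointness does hold on $\Sc_0(\R^d)=L^2(\R^d)$, and on $\Sc_{-p}(\R^d)$ the defect is a bounded (lower-order) operator, so the top-order parts cancel and only a remainder of size $O(|F(0,x)|^2\|\phi\|_{-p}^2)$ survives, uniformly in $x$ by \eqref{bound-F}. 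Integrating against $\nu$ and invoking \eqref{integrable-F} then yields the required bound $C\|\phi\|_{-p}^2$. Verifying this cancellation carefully, i.e. that the translation operators cost only the factor $|F(0,x)|^2$ at the \emph{same} Hermite-Sobolev level, is where the main work lies.
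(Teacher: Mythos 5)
Your proposal follows essentially the same route as the paper: existence via the finite-dimensional It\^{o} formula for $\phi(Z^z_t)$, integration against $\psi$, and duality (this is verbatim the paper's argument, including the identification of the compensator term), and uniqueness via the It\^{o} formula for a squared Hermite--Sobolev norm, a monotonicity inequality, and Gronwall. Two adjustments are needed in the uniqueness half. First, the squared norm must be taken at level $-p-1$, not $-p$: the solution is $\Sc_{-p}(\R^d)$-valued but the equation only holds in $\Sc_{-p-1}(\R^d)$ (since $L^{\ast}$ and $\partial_i$ lose regularity), so the pairing $\inpr[-p]{L^{\ast}U_{s-}}{U_{s-}}$ is not defined and the Hilbert-space It\^{o} formula applies only to $\|U_t\|_{-p-1}^2$; this is harmless, since $U_t=0$ in $\Sc_{-p-1}(\R^d)$ still forces $U_t=0$. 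Second, you dispose of the martingale terms by taking plain expectations; without an a priori moment bound on $\|U_t\|_{-p-1}$ the stochastic integrals are only local martingales, so this step requires a localization argument. The paper instead estimates $\Exp\sup_{t\le T}|\cdot|$ of each martingale term by the Burkholder--Davis--Gundy inequality and absorbs the resulting $\Exp\sup\|U\|_{-p-1}^2$ contributions by Young's inequality, at the cost of an additional first-order Taylor estimate for the compensated Poisson term (its inequality \eqref{first-order-taylor-reqd}), which your route would avoid. Finally, your treatment of the jump part of the monotonicity inequality --- the second-order Taylor expansion of $s\mapsto\|\tau_{sF(0,x)}\phi\|_{-p-1}^2$ combined with the fact that the skew-adjointness defect of $\partial$ on the Hermite--Sobolev space is a bounded operator $\mathbb{T}$ --- is exactly the mechanism the paper imports from \cite{MR3331916} and \cite{MR4148176}, so you have correctly identified where the genuinely new work beyond \cite{MR3647575} lies.
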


The proof of the above theorem has been discussed in Section 3. The existence part follows from It\^{o} formula \cite[Theorem 4.4.7]{MR2512800} and duality arguments. The proof of uniqueness uses the Monotonicity inequality (see \cite[Theorem 2.1]{MR2590157} and \cite[Theorem 3.1  and Theorem 4.2]{MR3331916}). Theorem \ref{SDE-Y} for $F \equiv 0$ is exactly \cite[Theorem 3.8]{MR3647575}. Another example of an SPDE in the class considered here is as follows
\begin{align*}
{Y}_t &= \psi + \int_0^t \partial{Y}_{s-}dB_s + \int_0^t \frac{1}{2}\partial^2{Y}_{s-}ds\\
&+ \int_{0<|x|<1}(\tau_{x}-Id){Y}_{t-} \widetilde{N}(dt,dx) +\int_{0<|x|<1}(\tau_{x} - Id +  x\partial){Y}_{t-}\nu(dx)dt, t \geq 0,
\end{align*}
where $d = 1, \sigma \equiv 1, b \equiv 0, F(0, x) = x, \forall x$.

An important observation is that $Y_t(\psi)$, as above, is given by a (random) $L^1(\R^d)$ function. This is an analogue of \cite[Proposition 3.11]{MR3647575} and has been stated in Proposition \ref{Y-as-integrable-function}.

Using an analogue of the interlacing technique \cite[Example 1.3.13 and Theorem 6.2.9]{MR2512800}, we also show the existence and uniqueness of strong solutions for SPDEs involving large jump terms. 

\begin{theorem}\label{SPDE-with-large-jump}
Let $p > \frac{d}{4}$ and $\psi \in L^1(\R^d)$. Let $G:\{x \in \R^d: |x| \geq 1\} \to \R^d$ be a measurable function. Let $\sigma, b$ and $F$ be as in Theorem \ref{SDE-Y}. Then there exists an $\Sc_{-p}(\R^d)$ valued process $\{Y_t(\psi)\}_t$ satisfying the following SPDE
\begin{equation}\label{SPDE-with-large-jump-equation}\begin{split}
  d{Y}_t = A^*({Y}_{t-})dB_t + L^*({Y}_{t-})dt
+&\int_{0<|x|<1}(\tau_{F(0,x)}-Id){Y}_{t-}\widetilde{N}(dt,dx)+\int_{|x|\geq1}(\tau_{G(x)}-Id){Y}_{t-}N(dt,dx) 
\\
+&\int_{0<|x|<1}(\tau_{F(0,x)}-Id+F(0,x)\partial){Y}_{t-}\nu(dx)dt,
\end{split}
 \end{equation}
with equality in $\Sc_{-p-1}$ and initial condition $\psi$. This solution is also unique.
\end{theorem}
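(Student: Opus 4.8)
The plan is to reduce everything to Theorem \ref{SDE-Y} by interlacing at the large jumps. Since the L\'evy measure satisfies $\nu(\{|x| \geq 1\}) < \infty$, the large jumps of $N$ accumulate only at infinity: on every bounded interval there are, almost surely, only finitely many large-jump times $0 < T_1 < T_2 < \cdots$, with associated jump sizes $\xi_k$ satisfying $|\xi_k| \geq 1$, and $T_k \uparrow \infty$. Consequently no integrability hypothesis on $G$ is required, and the large-jump integral in \eqref{SPDE-with-large-jump-equation} is, pathwise, the finite sum $\sum_{k : T_k \leq t}(\tau_{G(\xi_k)} - Id)Y_{T_k -}$. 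First I would enlarge the driving SDE \eqref{general-SDE} by the term $\int_{|x| \geq 1} G(x)\, N(dt, dx)$; the interlacing construction \cite[Theorem 6.2.9]{MR2512800} yields a unique strong solution $\{Z_t^z\}_t$ of the enlarged SDE which, on each interval $[T_{k-1}, T_k)$ (with $T_0 = 0$), coincides with the small-jump solution of \eqref{general-SDE} restarted from its value at $T_{k-1}$, and which at each $T_k$ satisfies $Z_{T_k}^z = Z_{T_k -}^z + G(\xi_k)$.

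Next I would define $\{Y_t(\psi)\}_t$ by the same pushforward formula \eqref{Y-psi}, so that $Y_t(\psi) = Z_t^*(\psi)$, and note that the arguments behind the Proposition following \eqref{Y-psi} and behind Lemma \ref{composition-defined} carry over verbatim; thus $\{Y_t(\psi)\}_t$ is an adapted, norm-bounded, $\Sc_{-p}(\R^d)$ valued process with RCLL paths. The key computation is the behaviour at a large-jump time. Using the identity $\tau_a \delta_b = \delta_{b+a}$ (immediate from the duality definitions of $\tau_a$ and $\delta_b$) together with the linearity and continuity of $\tau_{G(\xi_k)}$, the jump relation $Z_{T_k}^z = Z_{T_k -}^z + G(\xi_k)$ pushes forward to
\[
Y_{T_k}(\psi) = \int_{\R^d}\psi(z)\,\delta_{Z_{T_k -}^z + G(\xi_k)}\, dz = \tau_{G(\xi_k)} Y_{T_k -}(\psi),
\]
so the jump of $Y$ at $T_k$ is exactly $(\tau_{G(\xi_k)} - Id)Y_{T_k -}(\psi)$.

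On the complementary open intervals between consecutive large jumps, $Z_t^z$ solves the small-jump SDE, so Theorem \ref{SDE-Y}, applied with the (random) $L^1$ initial datum $Y_{T_{k-1}}(\psi)$ — which lies in $L^1(\R^d)$ because translation is an $L^1$ isometry and the small-jump solution is $L^1$ valued by Proposition \ref{Y-as-integrable-function} — shows that $\{Y_t(\psi)\}_t$ reproduces, in $\Sc_{-p-1}$, the $dB_t$, $dt$, compensated small-jump, and compensator terms of \eqref{SPDE-with-large-jump-equation}. Summing the jumps over $T_k \leq t$ supplies precisely the $N$-driven large-jump term, and assembling the inter-jump pieces with these jumps yields the full equation \eqref{SPDE-with-large-jump-equation}. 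Uniqueness follows by running the interlacing in reverse: any solution has prescribed jumps $(\tau_{G(\xi_k)} - Id)Y_{T_k -}$ at the $T_k$ and solves \eqref{general-SPDE} on each inter-jump interval, whose solution is unique by Theorem \ref{SDE-Y} via the Monotonicity inequality; matching solutions on $[0, T_1)$, then at $T_1$, then on $[T_1, T_2)$, and inducting on $k$ forces agreement on $[0,\infty)$.

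I expect the main obstacle to be the bookkeeping at the jump times rather than any single hard estimate. One must verify that a restart/strong-Markov argument legitimately permits Theorem \ref{SDE-Y} to be invoked on each random interval with the translated initial condition, that the translation operators $\tau_{G(\xi_k)}$ act as bounded operators on the relevant Hermite--Sobolev spaces for the (possibly large) realized jump sizes so that the interchange of $\tau_{G(\xi_k)}$ with the $dz$-integral above is justified, and that all the identities — established a priori in the various $\Sc_{-p}$ and $\Sc_{-p-1}$ norms — remain mutually consistent across the concatenation. Once this measurability and consistency bookkeeping is in place, the finiteness of the large jumps makes every remaining step a pathwise, finite-sum version of the computations already carried out for Theorem \ref{SDE-Y}.
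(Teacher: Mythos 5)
Your proposal is correct and follows essentially the same route as the paper: an interlacing construction at the finitely many large-jump times, using Theorem \ref{SDE-Y} (extended to random $L^1(\R^d)$ initial data, cf.\ Theorem \ref{SPDE-with-random-initial} and Remark \ref{Y-as-integrable-function-2}) between jumps and the translation operator $\tau_{G(\xi_k)}$ at each jump, with uniqueness obtained by matching solutions interval by interval. The only cosmetic difference is that you interlace the underlying flow $Z^z_t$ and push forward via \eqref{Y-psi}, whereas the paper interlaces the $\Sc_{-p}(\R^d)$ valued solutions $Y_t$ directly; both hinge on the same facts, namely $\tau_a\delta_b=\delta_{a+b}$ and the $L^1$-valuedness of the restart data.
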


The proof of Theorem \ref{SPDE-with-large-jump} has been discussed in Section 3.

\section{Proof of Theorem \ref{ddi-characterization}}

In this section, we give a sketch of the proof of Theorem \ref{ddi-characterization}. As mentioned in Section 1 Introduction, the proof is provided for the one-dimensional case only.

\begin{proof}
If the relation \eqref{dependence deterministically on initial condition dd} holds, then $f(0,z)=z, \forall z$. Rewriting \eqref{dependence deterministically on initial condition dd}, we have
\begin{align*}
\int_0^t\left(b(Z^z_{s-})-b(Z^0_{s-})-\frac{\partial f}{\partial s}(s,z)\right)ds &+\int^t_0\left(\sigma(Z^z_{s-})-\sigma(Z^0_{s-})\right)dB_s\\
&+\int^t_0\int_{0<|x|<1}\left(F(Z^z_{s-},x)-F(Z^0_{s-},x)\right)\widetilde{N}(ds,dx)=0
\end{align*}
We denote the first, second and the third term on the left hand side by $A_t, M^1_t$ and $M^2_t$ respectively. Note that $\{M^1_t + M^2_t\}_t$ is a martingale with paths which are right continuous and have left limits. But, $\{A_t\}_t$ is a predictable process of integrable variation. Since $M^1_0 + M^2_0 = 0$, using \cite[Lemma 25.11]{MR1876169} we conclude a.s. for all $t \geq 0$, $A_t = 0 = M^1_t + M^2_t$.

Observe that a.s. $[M^1, M^1 + M^2]_t = 0, \forall t \geq 0$.
Since $B$ and $N$ are independent, we have a.s. $[M^1, M^2]_t = 0, \forall t \geq 0$. Hence, a.s. $[M^1]_t = [M^1, M^1]_t = 0, \forall t \geq 0$. Similarly, a.s. $[M^2]_t = 0, \forall t \geq 0$. Using It\^{o} isometry, the identification of $\sigma$ goes as in \cite[Theorem 2.5]{MR3647575}. The identification of $F$ is also similar.

To make the identification of $b$, we start with a.s. $A_t = 0, \forall t \geq 0$. Since the process $\{Z^z_t\}_t$ has only countably many jumps in any time interval $[0, t]$, we have a.s. 
\[A_t = \int_0^t\left(b(Z^z_{s})-b(Z^0_{s})-\frac{\partial f}{\partial s}(s,z)\right)ds= 0, \forall t \geq 0.\]
Now, observe that
\begin{equation}\label{ito formula for b}
    b(Z^z_{t})-b(Z^0_{t})-\frac{\partial f}{\partial t}(t,z)=0, \forall z
\end{equation}
In particular for $t=0$, we have
$b(z) = b(0) - \frac{\partial f}{\partial t}(0,z), \forall z$. Since $b$ is a $C^2$ function on $\R$, we expand $b(Z^z_t)$ by It\^{o} formula \cite[Theorem 4.4.7]{MR2512800} and look at the martingale component of $b(Z^z_{t})-b(Z^0_{t})$. Applying \cite[Lemma 25.11]{MR1876169}, we conclude that $b(z) = Cz + k, \forall z$, where $C = b^\prime(0)$ and $k$ is some real number. This part of the argument is similar to the proof of \cite[Theorem 2.5]{MR3647575}.

Now, we have $f(0,z)= z, \forall z$ and
\[f(t,z)- z=\int_0^t\left(b(Z^z_{s})-b(Z^0_{s})\right)ds = \int_0^t C\left(Z^z_{s} - Z^0_{s}\right)ds=\int_0^t Cf(s,z)ds\]
and hence, $f(t,z)=ze^{tC}, \forall t \geq 0, z \in \R$. This completes the proof of necessity.

To prove the converse, observe that there exists a $\pp$ null set $\mathcal{N}$ such that for all $\omega\in \Omega\setminus\mathcal{N},t\geq 0$
\[Z^0_t=  \int^t_0\sigma dB_s + \int^t_0(CZ^0_{s-}+K) ds +\int^t_0\int_{ 0<|x| < 1 } F(0 , x ) \widetilde{ N }( ds,dx)\]
Hence on $\Omega \smallsetminus\mathcal{N}$, $\{Z^0_t+ze^{tC}\}_t$ solves the SDE \eqref{general-SDE}. This completes the proof.
\end{proof}

\begin{remark}
In the SDE related part of this paper, we have not considered the large jumps case. Usually the existence and uniqueness of strong solutions of SDEs of the form
\begin{equation}\label{SDE-with-large-jump}
\begin{split}
dZ_t &= b( Z_{t-} ) dt + \sigma( Z_{t-} ) dB_t + \int_{ 0<|x| < 1 } F(Z_{ t - } , x ) \widetilde{ N }( dt,dx) + \int_{ |x| \geq 1 } G(Z_{ t - } , x ) N( dt,dx),\\
Z_0 &= z,
\end{split}
\end{equation}
are obtained by interlacing arguments  \cite[Example 1.3.13 and Theorem 6.2.9]{MR2512800}. The deterministic dependence on initial conditions in this cases can be worked out in simple situations. Let us work in one dimension and in the pure jump case with $\sigma(\cdot) \equiv b(\cdot) \equiv F(\cdot, \cdot) \equiv 0$. Assume that $z \mapsto G(z, x)$ is continuous for all $x$ with $|x| \geq 1$. Here, as a necessary condition for deterministic dependence on the initial condition, we have $f(t, z) = Z^z_t - Z^0_t = z, \forall t < \tau_1$, where $\{\tau_n\}_n$ are the arrival times of the compound Poisson process given by $A(t) = \int_{|x| \geq 1} x\, N(t, dx), \forall t$. At $t = \tau_1$, by \cite[Theorem 6.2.9]{MR2512800}, we have for every fixed $z \in \R$
\[f(\tau_1, z) = z + [G(z, \bigtriangleup P(\tau_1)) - G(0, \bigtriangleup P(\tau_1))].\]
Since, $\bigtriangleup P(\tau_1)$ is independent of the Poisson process $N(t, \{x: |x| \geq 1\})$ (see the proof of \cite[Theorem 2.3.9]{MR2512800}), we conclude that $f(\tau_1, z)$ is dependent only on $z$. Since $\tau_1$ is an exponential random variable with mean $\frac{1}{\nu(x: |x| \geq 1)}$, we conclude $f(t, z) = z, \forall t > 0, z \in \R$. This implies, $G(z, \bigtriangleup P(\tau_1)) = G(0, \bigtriangleup P(\tau_1)), \forall z$ and hence $(z, x) \mapsto G(z, x)$ depends only on $x$, $\nu$-almost every $z$. Conversely, if $(z, x) \mapsto G(z, x)$ depends only on $x$, then the relation $f(t, z) = Z^z_t - Z^0_t$ is satisfied with $f(t, z) = z, \forall t, z$.
\end{remark}

\section{Proofs of Theorem \ref{SDE-Y} and Theorem \ref{SPDE-with-large-jump}}
This section is devoted to the proofs of Theorems \ref{SDE-Y} and \ref{SPDE-with-large-jump}.

\begin{proof}[Proof of Theorem \ref{SDE-Y}]
Structurally, the proof in higher dimensions include appropriate notational modifications from the proof in dimension one. Since the argument remains the same, to avoid notational complexity, the proof is discussed in dimension one only. 

Fix $\phi \in \Sc(\R)$ and $z \in \R$. Then, by It\^{o} formula \cite[Theorem 4.4.7]{MR2512800}, we have a.s.
\begin{align*}
  Z_t(\phi)(z)&=\phi(z)+ \int^t_0(Z_{-s}(L\phi))(z)ds+\int^t_0(Z_{s-}(A\phi))(z)dB_s\\
  &+\int^t_0\int_{0<|x|<1}\left\{Z_{s-}(\tau_{-F(0,x)}\phi)-Z_{s-}(\phi)\right\}(z)\widetilde{N}(ds,dx)\\
 &+\int^t_0\int_{0<|x|<1}\left\{Z_{s-}(\tau_{-F(0,x)}\phi)-Z_{s-}(\phi)-Z_{s-}(F(0,x)\partial\phi)\right\}(z)\nu(dx)ds.
\end{align*}
Hence, using \eqref{YasZ*}, we have, a.s.
\begin{align*}
\inpr{Y_t(\psi)}{\phi} &= \inpr{Z_t^*(\psi)}{\phi}\\
&= \inpr{\psi}{\phi}+\int^t_0\inpr{\psi}{Z_{s-}(A\phi)}dB_s+\int^t_0\int_{0<|x|<1}\left\langle\psi,Z_{s-}(\tau_{-F(0,x)}\phi)-Z_{s-}(\phi)\right\rangle\widetilde{N}(ds,dx)\\
+&\int^t_0\left\langle\psi,Z_{s-}(L\phi)\right\rangle ds+\int^t_0\int_{0<|x|<1}\left\langle\psi,Z_{s-}(\tau_{-F(0,x)}\phi)-Z_{s-}(\phi)-Z_{s-}(F(0,x)\partial\phi)\right\rangle\nu(dx)ds\\
&=\langle\psi,\phi\rangle+\left\langle \int^t_0A^*(Z_{s-}^*(\psi)dB_s,\phi\right\rangle +\left\langle \int^t_0L^*(Z_{s-}^*(\psi)ds,\phi\right\rangle \\
+&\left\langle\int^t_0\int_{0<|x|<1}(\tau_{F(0,x)}-Id)Z_{s-}^*(\psi) \widetilde{N}(ds,dx),\phi\right\rangle\\
&+\left\langle\int^t_0\int_{0<|x|<1}(\tau_{F(0,x)}-Id+F(0,x)\partial)Z_{s-}^*(\psi)\nu(dx)ds,\phi\right\rangle.
\end{align*}
Since the above result is true for all $\phi \in \Sc(\R)$, we conclude that $\{Y_t(\psi)\}_t$ solves \eqref{general-SPDE} with equality in $\Sc_{-p-1}(\R)$.

Now, we prove the uniqueness of strong solutions. Let $\{Y^1_t\}_t$ and $\{Y^2_t\}_t$ be two $\Sc_{-p}(\R)$ valued solutions of the SPDE \eqref{general-SPDE} with equality in $\Sc_{-p-1}(\R)$ and set $V_t = Y^1_t-Y^2_t, \forall t \geq 0$. Then $\{V_t\}_t$ is $\Sc_{-p}(\R)$ valued and solves the SPDE \eqref{general-SPDE} with initial condition $0$, i.e. a.s.
\begin{align*}
   V_t = \int^t_0L^*V_{s-}ds&+\int_0^tA^*V_{s-}dB_s+\int^t_0\int_{0<|x|<1}\left(\tau_{F(0,x)}-Id \right)V_{s-}\widetilde{N}(ds,dx)\\
&+\int^t_0\int_{0<|x|<1}\left(\tau_{F(0,x)}-Id+F(0,x)\partial \right)V_{s-}\nu(dx)ds 
\end{align*}

By the It\^{o} formula for $\|\cdot\|_{-p-1}^2$, we have a.s.
\begin{align*}
&\left\| V_t\right\|_{-p-1}^2\\
=&\int^t_0\left\{2\left\langle V_{s-}, L^*V_{s-} \right\rangle_{-p-1}+\left\| A^*V_{s-}\right\|^2_{-p-1} \right\} ds+\int_0^t2\left\langle V_{s-}, A^*V_{s-} \right\rangle_{-p-1} dB_s\\
&+\int^t_0\int_{0<|x|<1}\left\{\left\|\tau_{F(0,x)}V_{s-} \right\|^2_{-p-1}-\left\| V_{s-} \right\|^2_{-p-1} \right\}\widetilde{N}(ds,dx)\\
&+\int^t_0\int_{0<|x|<1}\left\{\left\| \tau_{F(0,x)}V_{s-} \right\|^2_{-p-1}-\left\| V_{s-} \right\|^2_{-p-1}+2\left\langle F(0,x)\partial V_{s-},V_{s-}\right\rangle_{-p-1} \right\}\nu(dx)ds 
 \end{align*}
Then, for any $T > 0$, 
\begin{equation}\label{inequality1}
\begin{split}
&\Exp\left\{\sup_{0\leq t\leq T}\left\| V_t\right\|_{-p-1}^2\right\}\\ &\leq \Exp\left\{\sup_{0\leq t\leq T}\int^t_0\left\{2\left\langle V_{s-}, L^*V_{s-} \right\rangle_{-p-1}+\left\| A^*V_{s-}\right\|^2_{-p-1} \right\} ds\right\}\\
&+\Exp\left\{\sup_{0\leq t\leq T}\left|\int_0^t2\left\langle V_{s-}, A^*V_{s-} \right\rangle_{-p-1} dB_s\right|\right\}\\
&+\Exp\left\{\sup_{0\leq t\leq T}\left|\int^t_0\int_{0<|x|<1}\left\{\left\|\tau_{F(0,x)}V_{s-} \right\|^2_{-p-1}-\left\| V_{s-} \right\|^2_{-p-1} \right\}\widetilde{N}(ds,dx)\right|\right\}\\
&+\Exp \left\{\sup_{0\leq t\leq T}\int^t_0\int_{0<|x|<1}\left\{\left\| \tau_{F(0,x)}V_{s-} \right\|^2_{-p-1}-\left\| V_{s-} \right\|^2_{-p-1}+2\left\langle F(0,x)\partial V_{s-},V_{s-}\right\rangle_{-p-1} \right\}\nu(dx)ds.\right\}
\end{split}
\end{equation}
We denote the four terms on the right hand side above by $I_1, I_2, I_3$ and $I_4$. We now observe the following inequalities. Constants appearing in the computations below depend on one or more of the parameters $p, \sigma, b$ and $F$ only and may change their values from line to line.
\begin{enumerate}[label=(\roman*)]
\item By the Monotonicity inequality for $A^\ast, L^\ast$ \cite[Theorem 4.2]{MR3331916}, we have   \begin{align*}
     I_1= & \Exp\left\{\sup_{0\leq t\leq T}\int^t_0\left\{2\left\langle V_{s-}, L^*V_{s-} \right\rangle_{-p-1}+\left\| A^*V_{s-}\right\|^2_{-p-1} \right\} ds\right\}\\
   \leq & \Exp\left\{\sup_{0\leq t\leq T}\int^t_0 C_1\|V_{s-}\|_{-p-1}^2 ds\right\}\\
   \leq& C_1\int^T_0 \Exp\left\{\sup_{0\leq t\leq s}\|V_{t}\|_{-p-1}^2\right\}ds.
 \end{align*}

\item It was observed in \cite[Proof of Theorem 3.1]{MR3331916} that there exists a bounded linear operator $\mathbb{T}:\Sc_{-p - 1}(\R) \to \Sc_{-p - 1}(\R)$ such that
\begin{equation}\label{first-order-monotonicity}
\inpr[-p-1]{\phi}{\partial \phi} = \inpr[-p-1]{\phi}{\mathbb{T} \phi}, \forall \phi \in \Sc_{-p - \frac{1}{2}}(\R).
\end{equation} 
By the Burkholder-Davis-Gundy inequality \cite[Theorem 26.12]{MR1876169} and \eqref{first-order-monotonicity}, we have
 \begin{align*}
     I_2 =& 2\Exp\left\{\sup_{0\leq t\leq T}\left|\int_0^t\left\langle V_{s-}, A^*V_{s-} \right\rangle_{-p-1} dB_s\right|\right\}\\
\leq & C_2\Exp\left(\int_0^T\left\langle V_{s-}, A^*V_{s-} \right\rangle_{-p-1}^2 ds\right)^\frac{1}{2}\\
= & C_2\Exp\left(\int_0^T\sigma^2\left\langle V_{s-}, \partial V_{s-} \right\rangle_{-p-1}^2 ds\right)^\frac{1}{2}\\
\leq &C_2\Exp\left(\int_0^T \left\|V_{s-}\right\|^4_{-p-1} ds\right)^\frac{1}{2}\\
\leq & C_2\Exp\left(\sup_{0\leq t\leq T}\left\|V_{t}\right\|^2_{-p-1}\int_0^T \left\|V_{s-}\right\|^2_{-p-1} ds\right)^\frac{1}{2}\\
\leq & C_2\left(\epsilon_1\Exp\left(\sup_{0\leq t\leq T}\left\|V_{t}\right\|^2_{-p-1}\right)+\frac{1}{\epsilon_1}\Exp\left(\int_0^T \left\|V_{s-}\right\|^2_{-p-1} ds\right)\right)\\
\leq & C_2\left(\epsilon_1\Exp\left(\sup_{0\leq t\leq T}\left\|V_{t}\right\|^2_{-p-1}\right)+\frac{1}{\epsilon_1}\int_0^T \Exp\left(\sup_{0\leq t\leq s}\left\|V_t\right\|^2_{-p-1} \right)ds\right),
 \end{align*} 
for any $\epsilon_1 > 0$.

\item Let $\{h_n: n \geq 0\}$ be orthonormal basis of $L^2(\R)$ formed by the Hermite functions  
\cite[Chapter 1.3]{MR771478}. Note that $\{(2n+1)^{-p} h_n: n \geq 0\}$ is an orthonormal basis for $\Sc_p(\R)$.

For fixed $z\in \R, \phi \in \Sc_{-p}(\R), m \geq 0$, the following $C^2$ function $g: [0, 1] \to \R$ defined by $g(v;z,\phi,m) := \inpr{\tau_{vz}\phi}{h_m}^2,\forall v\in [0,1]$ was considered in \cite[Lemma 2.5]{MR4148176}. Using \eqref{first-order-monotonicity}, we have
\begin{align*}
\sum_{m = 1}^\infty (2m+1)^{-2(p + 1)} g^\prime(v;z,\phi,m) &= -2\sum_{m = 1}^\infty(2m+1)^{-2(p + 1)} \inpr{\tau_{vz}\phi}{h_m}\inpr{z\partial\tau_{vz}\phi}{h_m}\\
&=-2 \inpr[-p-1]{\tau_{vz}}{z\partial \tau_{vz}}\\
&= -2z \inpr[-p-1]{\tau_{vz}}{\mathbb{T} \tau_{vz}}, \forall v \in (0, 1).
\end{align*}
Using this observation and the boundedness of the translation operators \cite[Theorem 2.1]{MR1999259}, we have
\begin{equation}\label{first-order-taylor-reqd}
\begin{split}
     &\left(\left\|\tau_{F(0,x)}V_{s-} \right\|^2_{-p-1}-\left\| V_{s-} \right\|^2_{-p-1}\right)^2\\ =&\left(\sum_{m=0}^\infty(2m+1)^{-2(p+1)}[f\left(1;F(0,x),V_{s-},m\right)-f\left(0;F(0,x),V_{s-},m\right)]\right)^2\\
 =&\left(\sum_{m=0}^\infty(2m+1)^{-2(p+1)}\int^1_0f'\left(v;F(0,x),V_{s-},m\right)dv\right)^2\\
  \leq & \; 4 (F(0, x))^2 \int^1_0\left\langle\tau_{vF(0,x)}V_{s-},\mathbb{T}\tau_{vF(0,x)}V_{s-}\right\rangle_{-p-1}^2 dv\\
  \leq & \; 4 (F(0, x))^2 \int^1_0\left(P_k(|vF(0,x)|)\left\|V_{s-}\right\|_{-p-1}\right)^4 dv\\
\leq & \; 4 \left(\sup_{0\leq |x|\leq 1}\left(P_k(|F(0,x)|)\right)^4\right) (F(0, x))^2 \left\|V_{s-}\right\|_{-p-1}^4,
 \end{split}
 \end{equation}
for some real polynomial $P_k$ of degree $k = 2([|p + 1|] + 1)$ with non-negative coefficients.

Applying the Burkholder-Davis-Gundy inequality \cite[Theorem 26.12]{MR1876169}, \eqref{first-order-taylor-reqd},  \eqref{integrable-F} and \eqref{bound-F} we have
 \begin{align*}
     I_3=&\Exp\left\{\sup_{0\leq t\leq T}\left|\int^t_0\int_{0<|x|<1}\left\{\left\|\tau_{F(0,x)}V_{s-} \right\|^2_{-p-1}-\left\| V_{s-} \right\|^2_{-p-1} \right\}\widetilde{N}(ds,dx)\right|\right\}\\
\leq &C_3\Exp\left(\int^T_0\int_{0<|x|<1}\left(\left\|\tau_{F(0,x)}V_{s-} \right\|^2_{-p-1}-\left\| V_{s-} \right\|^2_{-p-1} \right)^2\nu(dx)ds\right)^\frac{1}{2}\\
 \leq &C_3\Exp\left(\int^T_0\int_{0<|x|<1}F(0,x)^2\left\|V_{s-}\right\|_{-p-1}^4 \nu(dx)ds\right)^\frac{1}{2}\\
  \leq & C_3\left(\int_{0<|x|<1}F(0,x)^2\nu(dx)\right)^\frac{1}{2}\left(\epsilon_2\Exp\left(\sup_{0\leq t\leq T}\left\|V_{t}\right\|^2_{-p-1}\right)+\frac{1}{\epsilon_2}\int_0^T \Exp\left(\sup_{0\leq t\leq s}\left\|V_t\right\|^2_{-p-1} \right)ds\right)\\
  \leq&C_3\left[\epsilon_2\Exp\left(\sup_{0\leq t\leq T}\left\|V_t\right\|^2_{-p-1}\right)+\frac{1}{\epsilon_2}\int_0^T \Exp\left(\sup_{0\leq t\leq s}\left\|V_t\right\|^2_{-p-1} \right)ds)\right],
 \end{align*}
for any $\epsilon_2 > 0$.

\item The relevant estimate for $I_4$ is obtained similar to the estimate for Term 2 in the proof of \cite[Lemma 3.4]{MR4148176}. The difference with the present setup with that of \cite{MR4148176} is that the function $F$ there depended on both its arguments. In the present case, we use \eqref{integrable-F} and \eqref{bound-F} and obtain
\[I_4 \leq C_4\int_0^T \Exp\left(\sup_{0\leq t\leq s}\left\|V_t\right\|^2_{-p-1} \right)ds.\]
\end{enumerate}

Using these derived inequalities in \eqref{inequality1}, we end up with
\begin{align*}
&\Exp\{\sup_{0\leq t\leq T}\left\| V_t\right\|_{-p-1}^2\}\\
\leq& \left(C_1+C_4+\frac{C_2}{\epsilon_1}+\frac{C_3}{\epsilon_2}+\right)\int^T_0 \Exp\left\{\sup_{0\leq t\leq s}\|V_{t}\|_{-p-1}^2\right\}ds +\left(C_2\epsilon_1+C_3\epsilon_2\right)\Exp\left(\sup_{0\leq t\leq T}\left\|V_{t}\right\|^2_{-p-1}\right)
 \end{align*}
We choose $\epsilon_1 > 0$ and $\epsilon_2 > 0$ such that $\left(1-C_2\epsilon_1-C_3\epsilon_2\right) > 0$ and we have
\[\Exp\{\sup_{0\leq t\leq T}\left\| V_t\right\|_{-p-1}^2\}\leq  C_5 \int^T_0 \Exp\left\{\sup_{0\leq t\leq s}\|V_{t}\|_{-p-1}^2\right\}ds,\]
for some constant $C_5 > 0$ depending only on $p, \sigma, b$ and $F$.

Using Gronwall's inequality we get a.s. for all $t \in [0, T]$, $V_t=Y^1_t-Y^2_t=0$. The result follows.
\end{proof}

An important observation useful for the proof of Theorem \ref{SPDE-with-large-jump} is the following result that $Y_t(\psi)$, as in Theorem \ref{SDE-Y}, is given by a (random) integrable function. This is an analogue of \cite[Proposition 3.11]{MR3647575}.

\begin{proposition}\label{Y-as-integrable-function}
Under the assumptions of Theorem \ref{SDE-Y}, on $\omega \in \Omega\setminus\mathcal{N}$, the distribution $Y_t(\psi)$ is given by the $L^1(\R^d)$ function \[e^{-t\,tr(C)}\,\tau_{Z_t^0}\psi_t(\cdot),\]
where $\psi_t(z):=\psi(e^{-tC}z)$ for $t \geq 0, z 
\in 
\R^d$ and $tr(C)$ is the trace of the matrix $C$. 
\end{proposition}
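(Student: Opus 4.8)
The plan is to identify the tempered distribution $Y_t(\psi)$ by computing its action against an arbitrary Schwartz test function and recognizing the outcome as integration against the asserted $L^1$ function. Fix $\omega \in \Omega \setminus \mathcal{N}$ and $t \geq 0$, and let $\phi \in \Sc(\R^d)$. Starting from the definition \eqref{Y-psi} and inserting the explicit solution $Z_t^z = Z_t^0 + e^{tC} z$, I would write
\[\inpr{Y_t(\psi)}{\phi} = \int_{\R^d} \psi(z)\, \phi\big(Z_t^0 + e^{tC} z\big)\, dz.\]
The single substantive step is the affine change of variables $y = Z_t^0 + e^{tC} z$, equivalently $z = e^{-tC}(y - Z_t^0)$. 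The linear part $z \mapsto e^{tC} z$ is invertible, being a matrix exponential, and its Jacobian determinant is $\det(e^{tC}) = e^{t\, tr(C)}$; hence $dz = e^{-t\, tr(C)}\, dy$. Carrying out the substitution and using $\psi\big(e^{-tC}(y - Z_t^0)\big) = \psi_t(y - Z_t^0) = (\tau_{Z_t^0}\psi_t)(y)$ turns the integral into
\[e^{-t\, tr(C)} \int_{\R^d} (\tau_{Z_t^0}\psi_t)(y)\, \phi(y)\, dy,\]
which is precisely the action of the function $e^{-t\, tr(C)}\,\tau_{Z_t^0}\psi_t(\cdot)$ on $\phi$.

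I would then check that this function lies in $L^1(\R^d)$, so that by item (i) of the setup it defines an element of $\Sc_{-p}(\R^d)$ for $p > \frac{d}{4}$. Applying the same change of variables to absolute values gives
\[\int_{\R^d} \big| e^{-t\, tr(C)}\, \psi\big(e^{-tC}(y - Z_t^0)\big) \big|\, dy = \int_{\R^d} |\psi(z)|\, dz = \|\psi\|_{L^1(\R^d)} < \infty,\]
the Jacobian factors cancelling exactly, so integrability is preserved and the claimed function is a bona fide $L^1$ representative.

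Finally, since the computed identity holds for every $\phi \in \Sc(\R^d)$ and $\Sc(\R^d)$ is dense in $\Sc_p(\R^d)$, the two functionals $Y_t(\psi)$ and $e^{-t\, tr(C)}\,\tau_{Z_t^0}\psi_t(\cdot)$ agree on a dense subset of $\Sc_p(\R^d)$ and therefore coincide as elements of $\Sc_{-p}(\R^d)$. I do not anticipate a genuine obstacle: the proof is one affine change of variables, and the only points demanding care are bookkeeping ones --- correctly reading off the Jacobian $e^{t\, tr(C)}$ of the matrix exponential (via $\det e^{tC} = e^{t\, tr(C)}$) and matching the transformed integrand with the translated, reparametrized density $\tau_{Z_t^0}\psi_t$. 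In dimension one this reduces to the scalar substitution $y = Z_t^0 + e^{tC} z$ with Jacobian $e^{tC}$, and the higher-dimensional version differs only in notation.
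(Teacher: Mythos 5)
Your proof is correct and follows essentially the same route as the paper's: the single affine change of variables $y = Z_t^0 + e^{tC}z$ with Jacobian $\det(e^{tC}) = e^{t\,tr(C)}$, after which the integrand is recognized as $\phi$ integrated against $e^{-t\,tr(C)}\tau_{Z_t^0}\psi_t$. The added verifications (that the resulting function is in $L^1(\R^d)$ and the density argument identifying the two distributions) are sound and merely make explicit what the paper leaves implicit.
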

\begin{proof}
For any $\phi \in \Sc(\R^d)$, we have
\begin{align*}
\inpr{Y_t(\psi)}{\phi} &= 
\int_{\R^d} \psi(z) \phi(e^{tC}z + Z_t^0) \, dz\\
&= |det(e^{-tC})|\int_{\R^d} \psi(e^{-tC}(z-Z_t^0)) \phi(z) \, dz,\\
&= e^{-t\,tr(C)} \int_{\R^d} (\tau_{Z_t^0}\psi_t)(z) \phi(z) \, dz.
\end{align*}
In the last step above, we have used the equality $|det(e^{-tC})| = e^{-t\,tr(C)}$ (see
\cite[Problem 5.6.P43]{MR2978290}). This completes the proof.
\end{proof}

As a consequence of the above proposition, we obtain the next result.

\begin{theorem}\label{SPDE-with-random-initial}
Let $\psi$ be an $L^1(\R^d)$ valued $\F_0$ measurable random variable. Let $\sigma, b$ and $F$ be as in Theorem \ref{SDE-Y}. Fix $p > \frac{d}{4}$. Then, the SPDE \eqref{general-SPDE} with initial condition $\psi$ has a unique $\Sc_{-p}(\R^d)$ strong solution.
\end{theorem}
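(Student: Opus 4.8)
The plan is to reduce to the deterministic initial-condition case of Theorem \ref{SDE-Y} by exploiting the linearity of the SPDE in the unknown together with the $\F_0$-measurability of $\psi$. The central structural observation is the representation $Y_t(\eta) = Z_t^*(\eta)$ from \eqref{YasZ*}: for each $t$ and $\omega$ the solution is produced by applying a single (random) linear operator $Z_t^*(\omega)$, built only from the driving pair $(B,N)$, to the initial datum. Combined with the uniform bound $\sup_{z}\|\delta_z\|_{-p}\le C(p)$, this gives $\|Y_t(\eta)\|_{-p}=\|Z_t^*(\eta)\|_{-p}\le C(p)\|\eta\|_{L^1(\R^d)}$, so that $\eta\mapsto Y_t(\eta)$ is a bounded linear map $L^1(\R^d)\to\Sc_{-p}(\R^d)$ with norm independent of $(t,\omega)$. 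For a random datum I would take the candidate solution to be $Y_t(\psi):=Z_t^*(\psi)$, equivalently the explicit $L^1(\R^d)$ function $e^{-t\,tr(C)}\tau_{Z_t^0}\psi_t$ of Proposition \ref{Y-as-integrable-function} now evaluated at the random $\psi$; this is manifestly $\Sc_{-p}(\R^d)$ valued, $(\F_t)_t$ adapted and right continuous with left limits.

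For existence I would first treat $\F_0$-simple data. Write $\psi^{(n)}=\sum_i \eta_i\indicator{A_i}$ with $\eta_i\in L^1(\R^d)$ deterministic and $\{A_i\}\subset\F_0$ a finite partition of $\Omega$. Since each $\indicator{A_i}$ is $\F_0$-measurable, hence $\F_s$-measurable for every $s\ge 0$, it is a predictable multiplicative factor and may be pulled inside every integral appearing in \eqref{general-SPDE} (the $dB_s$, the $\widetilde N(ds,dx)$, the $ds$ and the $\nu(dx)ds$ integrals alike). As Theorem \ref{SDE-Y} asserts that $Y_t(\eta_i)$ solves \eqref{general-SPDE} with datum $\eta_i$, linearity then shows that $\sum_i \indicator{A_i}Y_t(\eta_i)=Z_t^*(\psi^{(n)})$ solves \eqref{general-SPDE} with datum $\psi^{(n)}$. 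To pass to a general $\F_0$-measurable $L^1(\R^d)$-valued $\psi$, I would localize on $\Omega_n=\{\|\psi\|_{L^1(\R^d)}\le n\}\in\F_0$, approximate $\psi\indicator{\Omega_n}$ by simple data $\psi^{(n)}$ with $\|\psi^{(n)}-\psi\indicator{\Omega_n}\|_{L^1(\R^d)}\to 0$ in $L^2(\Omega)$ (possible since $L^1(\R^d)$ is separable), and pass to the limit in each term of \eqref{general-SPDE}. The bound $\|Y_t(\psi^{(n)})-Y_t(\psi)\|_{-p}\le C(p)\|\psi^{(n)}-\psi\|_{L^1(\R^d)}$, together with the boundedness of $A^*, L^*, \partial$ and the translation operators, and the It\^o isometry and Burkholder--Davis--Gundy estimates already used in the proof of Theorem \ref{SDE-Y}, control the deterministic, Brownian, compensated-Poisson and compensator integrals respectively, so each term converges; letting $n\to\infty$ identifies $Z_t^*(\psi)$ as a solution on all of $\Omega$.

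For uniqueness I would argue that the computation in the proof of Theorem \ref{SDE-Y} is insensitive to the randomness of the initial datum. If $\{Y_t^1\}_t,\{Y_t^2\}_t$ are two $\Sc_{-p}(\R^d)$ valued strong solutions with common datum $\psi$, then $V_t:=Y_t^1-Y_t^2$ solves \eqref{general-SPDE} with $V_0=0$, and the It\^o formula for $\|\cdot\|_{-p-1}^2$ followed by the Monotonicity-inequality estimates $I_1,\dots,I_4$ produces $\Exp\sup_{0\le t\le T}\|V_t\|_{-p-1}^2\le C_5\int_0^T\Exp\sup_{0\le t\le s}\|V_t\|_{-p-1}^2\,ds$ exactly as before, using only $V_0=0$. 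Gronwall's inequality then forces $V\equiv 0$.

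The main obstacle is making the two limiting/integrability steps rigorous. In existence, one must justify interchanging the now-random spatial pairing $\int_{\R^d}\psi(z)\,(\cdot)\,dz$ with the stochastic integrals in $dB$ and $\widetilde N$; the cleanest route is the simple-function reduction above together with a stochastic Fubini theorem, both of which hinge on $\psi$ being $\F_0$-measurable and hence predictable in time. In uniqueness, the Gronwall step needs the finiteness $\Exp\sup_{0\le t\le T}\|V_t\|_{-p-1}^2<\infty$, which is not guaranteed when $\psi$ carries no integrability; I would restore it by a standard localization, stopping at $\tau_R=\inf\{t:\|V_t\|_{-p-1}\ge R\}$ and intersecting with the $\F_0$-sets $\Omega_n$ on which $\|\psi\|_{L^1(\R^d)}\le n$, running the estimate for the stopped and localized process and then sending $R,n\to\infty$.
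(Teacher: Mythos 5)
Your proposal is correct and follows the same overall strategy as the paper: define the candidate by the same formula $Y_t(\psi)=Z_t^*(\psi)$, localize on the $\F_0$-sets $\Omega_n=\{\|\psi\|_{L^1(\R^d)}\le n\}$, and reduce to the case already settled in Theorem \ref{SDE-Y}. The mechanics differ at two points. For existence with bounded random data, the paper simply re-runs the computation of Theorem \ref{SDE-Y} verbatim (the It\^o expansion of $Z_t(\phi)(z)$ never involves $\psi$, so pairing with a bounded $\F_0$-measurable $\psi$ $\omega$-wise causes no difficulty) and then patches the truncations $\psi^{(k)}$ together by the consistency property $Y_t(\psi^{(k)})=Y_t(\psi^{(n)})$ on $\{\|\psi\|_{L^1(\R^d)}\le k\}$; you instead insert an intermediate layer of $\F_0$-simple data, pull the indicators through the stochastic integrals, and pass to the limit term by term using the uniform bound $\|Z_t^*(\eta)\|_{-p}\le C(p)\|\eta\|_{L^1(\R^d)}$. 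Both routes work; yours is somewhat longer but makes explicit the measurability and interchange issues that the paper leaves implicit. For uniqueness the paper is terse --- it only invokes ``the consistency criteria'' --- whereas you run the Gronwall argument of Theorem \ref{SDE-Y} directly, correctly observing that it uses only $V_0=0$, and you add the stopping-time localization $\tau_R$ needed to guarantee $\Exp\sup_{0\le t\le T}\|V_{t\wedge\tau_R}\|_{-p-1}^2<\infty$ before Gronwall can be applied; this is more careful than what is written in the paper (one should note that the stopped process can overshoot $R$ by a jump, but the jump is controlled by $\|(\tau_{F(0,x)}-Id)V_{s-}\|_{-p-1}\le C\|V_{s-}\|_{-p-1}$ via \eqref{bound-F} and the polynomial bound on translations, so the localization closes). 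In short: same skeleton, with your version supplying more detail where the paper gestures.
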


\begin{proof}
First, assume that there exists $\alpha > 0$ such that $\|\psi(\omega)\|_{L^1{\R^d}} \leq \alpha, \forall \omega$. Define 
\begin{equation}\label{Y-psi-general}
Y_t(\omega)(\psi) := \int_{\R^d} (\psi(\omega))(z) \delta_{Z^z_t(\omega)}\, dz, \forall t \geq 0, \omega \in \Omega\setminus\mathcal{N}
\end{equation}
and set $Y_t(\omega)(\psi) = 0, \forall t \geq 0, \omega \in \mathcal{N}$, where $\mathcal{N}$ is as in \eqref{Y-psi}. As proved in Theorem \ref{SDE-Y}, it can be shown that $\{Y_t(\psi)\}_t$ is an $\Sc_{-p}(\R^d)$ valued adapted process satisfying the SPDE \eqref{general-SPDE} with equality in $\Sc_{-p-1}(\R^d)$.

Now, let $\psi$ be any $L^1(\R^d)$ valued $\F_0$ measurable random variable. For positive integers $k$, consider $\psi^{(k)}:\Omega \to L^1(\R^d)$ defined by $\psi^{(k)} = \psi$ on the set $\{\|\psi\|_{L^1(\R^d)} \leq k\}$ and $0$ otherwise. Consider the processes $\{Y_t(\psi^{(k)})\}_t, k = 1, 2, \cdots$ which solve the SPDE \eqref{general-SPDE} with initial condition $\psi^{(k)}$. Observe that the consistency property $Y_t(\psi^{(k)}) = Y_t(\psi^{(n)}), \forall n \geq k, \forall t \geq 0$ on the set $\{\|\psi\|_{L^1(\R^d)} \leq k\}$ holds and $\{\|\psi\|_{L^1(\R^d)} \leq k\} \uparrow \Omega$ as $k \to \infty$. Hence, $\{Y_t(\psi)\}_t$, defined by \eqref{Y-psi-general} for general $\psi$ solves the SPDE \eqref{general-SPDE} with initial condition $\psi$. Uniqueness of the solution follows from the consistency criteria discussed above.
\end{proof}

\begin{remark}\label{Y-as-integrable-function-2}
As discussed in Proposition \ref{Y-as-integrable-function} for deterministic $\psi$, we can show that $Y_t(\psi)$ obtained in Theorem \ref{SPDE-with-random-initial} for random $\psi$ is a (random) $L^1(\R^d)$ function.
\end{remark}

\begin{proof}[Proof of Theorem \ref{SPDE-with-large-jump}]
We use an analogue of interlacing argument in \cite[Theorem 6.2.9]{MR2512800} to construct a solution. 

Let $\{P_t\}_t$ be the compound Poisson process defined by $P(t)=\int_{|x|\geq1}xN(t,dx), \forall t$. Let $\tau_n, n = 1, 2, \cdots$ denote the arrival times of the jumps of $\{P_t\}_t$. Let $\{Y_t(\psi)\}_t$ denote the $\Sc_{-p}(\R^d)$ valued strong solution of SPDE \eqref{general-SPDE} with initial condition $\psi$, where $\psi$ is an $L^1(\R^d)$ valued random variable.

Consider the process $\{\tilde Y_t(\psi)\}_t$ defined as follows.
\begin{equation}\label{solution-by-interlacing}
\begin{split}
&\tilde{Y}_t(\psi) := Y_t(\psi), \text{ if } 0\leq t<\tau_1\\
&\tilde{Y}_{\tau_1}(\psi) := \tilde{Y}_{\tau_1-}(\psi)+\left(\tau_{G(\bigtriangleup P_{\tau_1})}-Id\right)\tilde{Y}_{\tau_1-}(\psi)\\
&\tilde{Y}_t(\psi)=\tilde{Y}_{\tau_1}(\psi)+Y_t(\tilde{Y}_{\tau_1}(\psi))-Y_{\tau_1}(\tilde{Y}_{\tau_1}(\psi)), \text{ if } \tau_1 < t < \tau_2,
\end{split}
\end{equation}
where $\bigtriangleup P_{\tau_1}$ gives the first jump of $\{P_t\}_t$. Note that $\tilde{Y}_{\tau_1}(\psi)$ is given by a random $L^1(\R^d)$ function (see Remark \ref{Y-as-integrable-function-2}) and $\{Y_t(\tilde{Y}_{\tau_1})\}_t$ denotes the unique solution to \eqref{general-SPDE} with initial condition $\tilde{Y}_{\tau_1}$ (see Theorem \ref{SPDE-with-random-initial}).

Continuing as in \eqref{solution-by-interlacing}, recursively we define $\tilde Y_t(\psi)$ for all $t \geq 0$. Note that the process $\{\tilde Y_t(\psi)\}_t$ is adapted, $\Sc_{-p}(\R^d)$ valued and has paths which are right continuous with left limits. Then, $\{\tilde Y_t(\psi)\}_t$ solves \eqref{SPDE-with-large-jump-equation}. Uniqueness of the solution follows from standard arguments.
\end{proof}

\noindent\textbf{Acknowledgements:} The first author would like to acknowledge the fact that he was supported by the University Grants Commission (Government of India) Ph.D research Fellowship. The second author would like to acknowledge the fact that he was partially supported by the Matrics grant MTR/2021/000517 from the Science and Engineering Research Board (Department of Science \& Technology, Government of India).

\bibliographystyle{amsplain}
\bibliography{ref}
 
\end{document}